\documentclass[12pt]{article}
\usepackage[utf8]{inputenc}
\usepackage[T1]{fontenc}
\usepackage{lmodern}
\usepackage{graphicx}
\newcommand{\vs}{\vskip10pt}

\usepackage{amsmath}
\usepackage{amsthm}
\usepackage{amssymb}
\usepackage[export]{adjustbox}
\usepackage[margin=0.5in]{geometry}
\setlength{\parindent}{10pt}
\usepackage{setspace}
\usepackage{enumitem}
\usepackage{float}
\usepackage{tabularx,ragged2e}
\newtheorem{prop}{Proposition}

\newtheorem{thm}[prop]{Theorem}

\newtheorem{rem}[prop]{Remark}

\newtheorem*{theorem*}{Theorem}
\newtheorem*{lemma*}{Lemma}
\newtheorem*{prop*}{Proposition}

\usepackage{hyperref}
\hypersetup{
	colorlinks,
	citecolor=black,
	filecolor=black,
	linkcolor=black,
	urlcolor=black
}

\urlstyle{same}

\title{An Algorithm to Solve the Generalized Conjugacy Problem in Relatively Hyperbolic Groups}
\author{Christopher Karpinski \footnote{McGill University, email: christopher.karpinski@mail.mcgill.ca.}}

\newcommand\blfootnote[1]{%
	\begingroup
	\renewcommand\thefootnote{}\footnote{#1}%
	\addtocounter{footnote}{-1}%
	\endgroup
}

\begin{document}
	
	\maketitle
	
	\begin{abstract}
		
		In this note, we provide a (super-exponential time) algorithm to solve the generalized conjugacy problem in relatively hyperbolic groups, given solvability of the generalized conjugacy problem in each of the parabolic subgroups. 
		
	\end{abstract}
	
	
	\blfootnote{2010 \textit{Mathematics Subject Classification}: 20F65, 20F67, 20F70, 05C25.}
	\blfootnote{\textit{Key words and phrases.} Algorithmic problems, generalized conjugacy problem, relatively hyperbolic groups.}
	\blfootnote{This work was supported by NSERC grant RGPIN-2016-06154.}
	
	\newpage
	
	\section{Introduction}
	
	\vs
	
	The \textit{generalized conjugacy problem (GCP)}, also known as the \textit{simultaneous conjugacy problem}, the \textit{conjugacy problem for finite lists}, the \textit{Whitehead problem for inner automorphisms} and the \textit{multiple conjugacy problem} asks whether there exists an algorithm such that given as input a recursive presentation for a recursively presented group $G$ and two finite lists $\{a_1,...,a_m\}, \{b_1,...,b_m\}$ of elements of $G$ decides whether or not these two lists are conjugate, that is, if there exists an element $x \in G$ such that $a_i^x = b_i$ for all $i = 1,...,m$ (recall that the notation $a^x$ means $x^{-1} a x$). The \textit{generalized conjugacy search problem (GCSP)} asks whether there exists an algorithm that can produce a conjugating element between two finite lists $\{a_1,...,a_m\}, \{b_1,...,b_m\}$ or determine that no such element exists.
	
	\vs 
	
	The GCP is known to be solvable in several classes of groups. For instance, the following classes of groups all admit solutions to the GCP: 
	
	\begin{itemize}
		\item Finitely generated virtually free groups (Ladra and Silva in [13]).
		\item Artin groups of large type (Bezverkhnii in [4]).
		\item Braid groups and general linear groups (Hahn, Lee and Park in [11]).
		\item Generalized Baumslag-Solitar groups (Beeker in [3]).
		\item Polycyclic groups, where the solution to the GCP reduces to repeated application of the solution to the conjugacy problem for single elements (Eick and Kahrobaei in [9]).
	\end{itemize}
	
	\vs
	
	Besides the above, an important class of groups for which the GCP is solvable is the class of hyperbolic groups, as was first shown by Bridson and Howie in [6]. While Bridson and Howie solved the GCP in hyperbolic groups, they were unable to produce a sub-exponential time algorithm that dealt with lists of torsion elements. In [7], using results from [10], a linear run-time algorithm was constructed by Buckley and Holt that solves the GCP for lists of torsion elements as well, thereby establishing a linear run-time algorithm to solve the GCP in hyperbolic groups. 
	
	\vs 
	
	In this paper, we consider the solvability of the GCP in the more general class of \textit{relatively hyperbolic groups} (which we review below). To the author's knowledge, a solution to the GCP has never been recorded anywhere. There have, however, been solutions to closely related problems in relatively hyperbolic groups (or certain subclasses of relatively hyperbolic groups). For instance, in [12], Kharlampovich and Ventura show that the Whitehead problem (which asks if there exists an algorithm such that given finite lists $\{a_1,...,a_m\}, \{b_1,...,b_m\}$ of elements of $G$ determines if there exists an automorphism of $G$ taking each $a_i$ to $b_i$) is solvable in toral relatively hyperbolic groups (i.e. torsion-free groups hyperbolic relative to abelian subgroups). There have also been several papers on the solvability of the (single) conjugacy problem (i.e. the conjugacy problem for single elements) in relatively hyperbolic groups. In [8], Bumagin shows that the conjugacy problem in relatively hyperbolic groups can be solved in polynomial time provided the conjugacy problem is solvable in polynomial time in each of the parabolic subgroups. The solvability of the conjugacy problem in relatively hyperbolic groups was also discussed in an earlier paper of O'Connor in [14], as well as in the paper [2] by Antolin and Sale. In the paper [2], the  conjugacy problem is solved using a variation of the conjugacy length function called the \textit{permutation conjugacy length function}. Antolin and Ciobanu also gave an algorithm in [1] to solve the conjugacy problem in groups hyperbolic relative to abelian subgroups using bounded conjugacy diagrams, which generalizes an algorithm given in [6] to solve the conjugacy problem in hyperbolic groups using bounded conjugacy diagrams. However, neither of these approaches to solve the conjugacy problem generalize to give a solution to the GCP, as we discuss at the end of this paper.  The goal of this paper is to extend the work of [1], [2], [8] and [14] to give an algorithm that solves the GCP in relatively hyperbolic groups, provided the parabolic subgroups have solvable GCP. 
	
	\vs 
	
	\section{Background}
	
	Let us define relatively hyperbolic groups and review some basic terminology. 
	
	\vs 
	
	Let $G$ be a group and $\{H_{\lambda}\}_{\lambda \in \Lambda}$ a collection of subgroups of $G$ and suppose $X$ is a set such that $G$ is generated by $X \cup \mathcal{H}$, where $\mathcal{H} = \cup_{\lambda \in \Lambda} H_{\lambda}$. In the Cayley graph $\Gamma(G; X \cup \mathcal{H})$, edges may be labeled by an element from $X$ or an element from a subgroup $H_{\lambda}$. Given a path $\alpha$ in $\Gamma(G; X \cup \mathcal{H})$ (i.e. a sequence of edges in $\Gamma(G; X \cup \mathcal{H})$),  the \textbf{label} of $\alpha$ is the word in $X \cup \mathcal{H}$ obtained as the concatenation of the edge labels of $\alpha$ reading from $\alpha_-$ to $\alpha_+$. A subpath $p$ of $\alpha$ whose label is a word in some $H_{\lambda}$ is called an $\mathbf{H_{\lambda}}$\textbf{-subpath} of $\alpha$. A maximal $H_{\lambda}$-subpath of $\alpha$ is called an $\mathbf{H_{\lambda}}$\textbf{-component} of $\alpha$. By a \textbf{(parabolic) component} of $\alpha$, we mean an $H_{\lambda}$-component of $\alpha$ for some $\lambda \in \Lambda$. Similarly, given a word $w$ in the alphabet $X \cup \mathcal{H}$, a subword $v$ of $w$ is called an $\mathbf{H_{\lambda}}$\textbf{-subword} of $w$ if is a word in the alphabet $H_{\lambda}$. An $\mathbf{H_{\lambda}}$\textbf{-syllable} of $w$ is a maximal $H_{\lambda}$ subword of $w$. A \textbf{syllable} of $w$ is an $H_{\lambda}$-syllable of $w$ for some $\lambda \in \Lambda$. 
	
	\vs
	
	Given a path $\alpha$, we use the notation $\alpha_-$ and $\alpha_+$ to denote the origin (i.e. start) and terminus (i.e. end), respectively, of $\alpha$. Two $H_{\lambda}$-components $p,q$ of a path $\alpha$ in $\Gamma(G; X \cup \mathcal{H})$ are called \textbf{connected} if there exist $h_1, h_2 \in H_{\lambda}$ such that $p_- h_1 = q_-$ and $p_+ h_2 = q_+$. If an $H_{\lambda}$-component $p$ of $\alpha$ is not connected to any other $H_{\lambda}$-component of $\alpha$, then $p$ is called an \textbf{isolated} $\mathbf{H_{\lambda}}$\textbf{-component} of $\alpha$. A path $\alpha$ that contains two connected $H_{\lambda}$-components for some $\lambda \in \Lambda$ is said to \textbf{backtrack}, otherwise it is said to be a path \textbf{without backtracking}.
	
	\vs 
	
	We let $d$ denote the natural path metric on the Cayley graph $\Gamma(G; X \cup \mathcal{H})$. Given a path $\alpha$, we let $\ell(\alpha)$ denote the length of the path (i.e. the number of edges in $\alpha$). For $C \geq 1$ and $D \geq 0$, a path $\alpha$ in $\Gamma(G; X \cup \mathcal{H})$ is called a $\mathbf{(C,D)}$\textbf{-quasi-geodesic} if for any subpath $p$ of $\alpha$ we have $\ell(p) \leq Cd(p_-, p_+) + D$. Note that a geodesic path in $\Gamma(G; X \cup \mathcal{H})$ is a (1,0)-quasi-geodesic. 
	
	\vs 
	
	We say that $G$ is \textbf{weakly hyperbolic relative to} the collection $\{H_{\lambda}\}_{\lambda \in \Lambda}$ if there exists a finite generating set $X$ of $G$ such that $\Gamma(G; X \cup \mathcal{H})$  is a hyperbolic metric space (i.e. satisfies the slim triangle condition, see page 399 of [5]). We refer to the word metric on $G$ from the generating set $X$ as $d_X$ (as opposed to the metric from the generating set $X \cup \mathcal{H}$ which we denote by $d$). If $G$ is weakly hyperbolic relative to $\{H_{\lambda}\}_{\lambda \in \Lambda}$, then we call the subgroups $\{H_{\lambda}\}_{\lambda \in \Lambda}$ the \textbf{parabolic subgroups}. 
	
	\vs 
	
	We say that $G$ is \textbf{hyperbolic relative to } $\{H_{\lambda}\}_{\lambda \in \Lambda}$ if $G$ is weakly hyperbolic relative to $\{H_{\lambda}\}_{\lambda \in \Lambda}$ and $\Gamma(G; X \cup \mathcal{H})$ satisfies the following property. For any $C \geq 1$ and $D \geq 0$ there exists a constant $k$ depending only on $C,D$ such that for any $(C,D)$-quasi-geodesics $\alpha, \beta$ without backtracking and with the same endpoints (i.e. $\alpha_- = \beta_-$ and $\alpha_+ = \beta_+$), if $p$ is a component of $\alpha$ or $\beta$ which is not connected to any component of $\alpha$ or $\beta$  then $d_X(p_-, p_+) \leq k$. This property is known as \textbf{bounded coset penetration property }(BCP for short). (This is known as Farb's definition of relatively hyperbolic groups. In his paper [15], Osin uses a different definition using (relative) Dehn functions that applies for non-finitely generated groups as well, but Farb's and Osin's definitions are equivalent for finitely generated groups). 
	
	\vs
	
	Throughout this paper, we fix a group $G$ generated by a finite set $X$ hyperbolic relative to finitely presented subgroups $\{H_1,...,H_N\}$. We assume that the GCP is solvable in each parabolic subgroup $H_i$. We assume that $X$ satisfies the conditions of the following lemma from [15]: 
	
	\begin{lemma*} [15, Lemma 3.1]
		There exists a constant $A$ such that the following holds. Let G be a finitely generated group hyperbolic relative to a collection of subgroups $\{H_1,...,H_m\}$. Then there exists a finite generating set $X$ of G satisfying the following condition. Let $q$ be a cycle in $\Gamma(G;X \cup \mathcal{H})$, $ p_1, . . . , p_k$ a set of isolated $H_i$-components of $q$. Then $\sum_{i=1}^k d_X((p_i)_-, (p_i)_+) \leq A \ell(q)$. 
	\end{lemma*}

	 (As shown in [15], such finite generating set $X$ always exists). We also assume  that $X$ is symmetrized, that is, $X = X^{-1}$. We will denote $\Gamma := \Gamma(G;X)$ the Cayley graph of $G$ with respect to $X$ and $\hat{\Gamma} := \Gamma(G; X \cup \mathcal{H})$ the Cayley graph of $G$ with respect to $X \cup \mathcal{H}$. Given an element $g \in G$, as usual $\vert g \vert_X, \vert g \vert_{X \cup \mathcal{H}}$ denote the word lengths of $g$ with respect to the generating sets $X$ and $X \cup \mathcal{H}$, respectively. The notation $B_r(x)$ denotes the closed ball of radius $r$ about the point $x$ in $\hat{\Gamma}$ with its natural combinatorial metric and similarly $B_r^X(x)$ denotes the closed ball of radius $r$ about the point $x$ in $\Gamma$ with its natural combinatorial metric. 
	
	\vs 
	
	Let $\alpha, \beta$ be a pair of $(\lambda, \varepsilon)$-geodesics in $\hat{\Gamma}$ for $\lambda \geq 1$ and $\varepsilon \geq 0$. We call $\alpha, \beta$ $\mathbf{k}$\textbf{-similar} (for $k \geq 0$)  if $\max\{d_X(\alpha_-, \beta_-), d_X(\alpha_+, \beta_+)\} \leq k$. We call $\alpha, \beta$ \textbf{symmetric} if they have the same label. Given vertices $u$ on $\alpha$ and $v$ on $\beta$, we say that $u,v$ are \textbf{synchronous vertices} if $\ell([\alpha_-, u]) = \ell([\beta_-, v])$ (where $[\alpha_-, u]$ and $[\beta_-, v]$ denote the segments along $\alpha, \beta$, respectively between $\alpha_-, u$ and $\beta_-, v$, respectively).  Given two  $H_i$-components $p,q$ of $\alpha, \beta$, respectively, we say that $p,q$ are \textbf{synchronous components} if the vertices $p_-, q_-$ are synchronous. 
	
	
	\vs 
	
	\section{Main results}
	
	\vs
	
	Our goal will be to prove the following theorem.
	
	\begin{thm}
		
		There exists a recursive function $f: \mathbb{N} \rightarrow \mathbb{N}$ such that the following holds. Let \newline$\{a_1,...,a_m\}, \{b_1,...,b_m\}$ be conjugate lists of elements in $G$ and let $\mu = \max \{\vert a_i \vert_X, \vert b_i \vert_X: i = 1,...,m\}$. Then there exists an element $x$ conjugating $\{a_1,...,a_m\}$ to $\{b_1,...,b_m\}$ such that $\vert x \vert_X \leq f(\mu)$. 
		
	\end{thm}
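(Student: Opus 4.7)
My plan is an induction on the length $m$ of the list. I would first invoke a recursive bound on short conjugators for the ordinary (single-element) conjugacy problem in $G$: there should exist a recursive $g: \mathbb{N} \rightarrow \mathbb{N}$ such that whenever $a, b \in G$ are conjugate, some $y$ with $a^y = b$ satisfies $|y|_X \leq g(\max\{|a|_X,|b|_X\})$. In a relatively hyperbolic group such a $g$ is known to exist given the solvability of the conjugacy problem in the parabolics. Applying this to $a_1$ and $b_1$ gives a short $y$; every list-conjugator then factors as $yc$ with $c \in C_G(b_1)$, so it suffices to find a recursively bounded $c$ in $C_G(b_1)$ conjugating the tuple $(a_2^y, \ldots, a_m^y)$ to $(b_2, \ldots, b_m)$.

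\textbf{Centralizer structure and inductive step.} The inductive step exploits the dichotomy for non-torsion elements of $G$. If $b_1$ is loxodromic then $C_G(b_1)$ is virtually cyclic, so only a bounded family of candidate elements $c$ of controlled length need be examined. If $b_1$ is parabolic of infinite order, then by almost malnormality of the peripherals $C_G(b_1)$ sits inside a conjugate $g H_i g^{-1}$ of some $H_i$; after using a short element $g$ to move the inner problem into $H_i$, the search for $c$ becomes a GCP instance of length $m-1$ in $H_i$, to which the hypothesis applies and which yields a recursive length bound on $c$ in the generators of $H_i$, and hence in $X$ via Lemma 3.1 of [8]. In either case the new GCP instance has inputs bounded recursively in $\mu$, so composing the bounds obtained along the recursion produces the desired function $f$.

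\textbf{Main obstacle.} The hardest case is when the leading element is torsion (or trivial), for then $C_G(b_1)$ need neither be virtually cyclic nor contained in a single parabolic. I would address this by first permuting each tuple to place some element of infinite order at the front whenever one exists (which does not affect conjugacy of the tuples), and, when every $a_i$ (equivalently every $b_i$) is torsion, appealing to the fact that in a relatively hyperbolic group there are only finitely many conjugacy classes of finite subgroups and each has bounded order; together with the BCP-type properties of $X$ from Lemma 3.1 of [8], this should let one bound a conjugator of the whole torsion tuple directly from the geometry of $\hat{\Gamma}$. Making this torsion case effective and recursive, and detecting uniformly which case one is in, is the principal technical step and is where the main work will lie.
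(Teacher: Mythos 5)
Your strategy (induction on the list length via centralizers of the leading element) is genuinely different from the paper's proof, which takes a conjugator $x$ of minimal relative length, writes it as $g_1\cdots g_\ell$, observes via Theorem 3.37 of [8] that every prefix-conjugate tuple $(a_1^{x_j},\dots,a_m^{x_j})$ lies in an $X$-ball of radius $\chi(\mu)$, and applies the Pigeonhole Principle to bound $\ell$; it then bounds the $X$-length of each parabolic syllable either by Lemma 3.43 of [8] (when the syllable is not connected to its synchronous component in some quadrilateral) or by replacing it with a short solution of a GCP instance \emph{inside the peripheral subgroup}, formed by the connectors between synchronous components. That argument is uniform in $m$ and indifferent to torsion. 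Your proposal, by contrast, has genuine gaps. The most serious is in the parabolic case: after arranging $C_G(b_1)\le gH_ig^{-1}$ and translating by $g$, the remaining elements $a_2^y,\dots,a_m^y$ and $b_2,\dots,b_m$ are elements of $G$, not of $H_i$, so what you face is not a GCP instance in $H_i$ but a \emph{constrained} GCP in $G$ (conjugator restricted to a subgroup of $H_i$), which is covered neither by the peripheral hypothesis nor by your induction hypothesis. Note that the paper's Case 2 applies the parabolic GCP only to the lists of connectors $g_{ij}, f_{ij}$, which genuinely lie in a single peripheral subgroup and are conjugate by an element of it.

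The torsion case also does not work as sketched: a list of torsion elements can generate an infinite subgroup (two involutions generating an infinite dihedral group, for instance), so finiteness of conjugacy classes of finite subgroups is not applicable to the tuple; moreover, that finiteness statement itself can fail for relatively hyperbolic groups, since a finitely generated peripheral subgroup may have infinitely many conjugacy classes of finite subgroups and these persist in $G$. Finally, even in the loxodromic case, virtual cyclicity of $C_G(b_1)$ does not by itself yield ``a bounded family of candidates'': the centralizer is infinite, and you would still need to argue that if some $z^n f$ conjugates the truncated tuples then some bounded $n$ does (say, via linear growth of $\vert (a_i^y)^{z^n}\vert_X$ in $\vert n\vert$ unless $a_i^y$ essentially preserves the axis of $z$). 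Each difficulty might be repairable with substantial extra work, but as written the inductive step does not close; the paper's route avoids centralizers entirely and is the one I would recommend.
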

	
	Theorem 1 immediately produces an algorithm to solve the GCP in $G$ (in fact, the GCSP), namely, for each $x \in B^X_{f(\mu)}(1)$ check if $a_i^x = b_i$ for all $i = 1,...,m$ using the solution to the word problem in $G$ (which exists since the word problem is solvable in each $H_i$; see section 5.1 in [15]). If these checks fail for all $x \in B^X_{f(\mu)}(1)$ then by Theorem 1 the lists are not conjugate, otherwise the lists are conjugate. The function $f$ that we will produce will be super-exponential in $\mu$. We shall not attempt to produce algorithms with optimal efficiency in this paper. 
	
	\vs
	
	To prove Theorem 1, we will make use of the following results from Section 3.4 of [15]:
	\vs 
	
	\begin{theorem*} [15, Theorem 3.37]
		For any $k \geq 0$, there exists a constant $\chi = \chi(k)$ with the following property. Let $(p,q)$ be a symmetric pair of $k$–similar geodesics in $\hat{\Gamma}$, $u, v$ a pair of synchronous vertices on $p$ and $q$, respectively. Then $d_X(u, v) \leq \chi$.
	\end{theorem*}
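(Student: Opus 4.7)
The plan is to reduce the uniform bound $d_X(u,v) \leq \chi(k)$ to a syllable-by-syllable comparison of the geodesics $p$ and $q$, using a Bounded Coset Penetration (BCP) style analysis enabled by the structural properties of $X$ from Lemma 3.1 of [8].

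First, I would introduce the natural syllable factorization of a geodesic in $\hat{\Gamma}$: each such geodesic decomposes as a concatenation of single $X$-edges and maximal $H_i$-components (maximal subpaths whose labels lie in a single parabolic). Using the $k$-similarity together with the symmetry of $(p,q)$, I expect the syllable structures of $p$ and $q$ to match bijectively---the $j$th syllable of $p$ and the $j$th syllable of $q$ are of the same type, and synchronous vertices correspond to matching syllable boundaries, so it suffices to bound $d_X$ at such boundaries (any synchronous vertex lying inside a single matched syllable reduces to the boundary case after absorbing at most one additional letter).

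The key step is to control $d_X$ at a matched pair of $H_i$-components $(c^{(j)}_p, c^{(j)}_q)$. I would form a geodesic quadrilateral in $\hat{\Gamma}$ whose opposite sides are suitable subpaths of $p$ and $q$ and whose remaining two sides are the short $X$-connectors at the endpoints afforded by $k$-similarity. The symmetry of $(p,q)$---which presumably delivers a group element identifying the pair up to reversal---should then prevent $c^{(j)}_p$ from being isolated in this quadrilateral, forcing it to be connected to $c^{(j)}_q$, i.e., to lie in the same coset $gH_i$. The isolated-components machinery underlying Lemma 3.1 of [8] then bounds the $X$-distance between the entry points of two connected $H_i$-components by a constant depending only on $k$. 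The matched single $X$-edges between consecutive $H_i$-components then contribute at most one letter of $X$-discrepancy, which is controlled by the same BCP input.

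The principal obstacle is preventing constants from compounding: a naive iteration of BCP estimates from one syllable to the next would allow $\chi$ to grow with the length of $p$ and $q$, which is not permitted. The symmetry of the pair is precisely what blocks this---at every matched syllable boundary the same quadrilateral/BCP argument applies with the same input constant, rather than passing accumulated error down the line. Once this uniformity is established, $\chi(k)$ can be assembled as an explicit function of the BCP and isolated-components constants from Lemma 3.1 of [8].
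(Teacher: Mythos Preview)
This theorem is not proved in the present paper; it is quoted from Osin [8] as a black-box input, so there is no proof here to compare against. I comment only on the internal soundness of your sketch.

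The central step has a genuine gap. You assert that the symmetry of $(p,q)$ ``should then prevent $c^{(j)}_p$ from being isolated \ldots\ forcing it to be connected to $c^{(j)}_q$.'' In Osin's terminology, \emph{symmetric} means only that $p$ and $q$ carry the same label; the bijective matching of syllable types is then tautological, but nothing forces the $j$th $H_i$-component of $p$ and the $j$th $H_i$-component of $q$ to lie in a common left $H_i$-coset. Indeed, Lemma~3.43---quoted immediately after the theorem in this paper---is stated precisely under the hypothesis that \emph{no} synchronous components of $p$ and $q$ are connected, so that configuration genuinely occurs and your argument does not cover it. Even when a synchronous pair \emph{is} connected, connectedness says only that the connector lies in $H_i$, not that its $X$-length is bounded in terms of $k$; extracting an $X$-bound is a separate step you have not supplied. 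Finally, your resolution of the accumulation problem (``the same quadrilateral/BCP argument applies with the same input constant'') is the assertion to be proved rather than an argument for it: BCP-type estimates such as Lemma~3.43 produce bounds that grow with the relative length of the subpath, so a syllable-by-syllable walk from $p_-$ \emph{does} accumulate error absent a further mechanism. Osin's proof in \S 3.4 of [8] treats the connected and unconnected cases by separate, more delicate arguments than your outline provides.
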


	\begin{lemma*} [15, Lemma 3.43]
		For any $k \geq 0, \lambda \geq 1, c \geq 0$, there exists a constant $\eta = \eta(\lambda,c,k)$ such that the following condition holds. Let $(p, q)$ be an arbitrary symmetric pair of $k$–similar $(\lambda, c)$–quasi–geodesics without backtracking in $\hat{\Gamma}$ such that no synchronous components of $p$ and $q$ are connected. Then for any $i = 1, . . . , N$ and any $H_i$–component $e$ of $p$, we have $d_X (e_-, e_+) \leq \eta \ell([p_-, e_+])$ where $[p_-,e_+]$ is the segment of $p$ between $p_-$ and $e_+$.
	\end{lemma*}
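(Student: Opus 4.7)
The plan is to construct a closed loop in $\hat{\Gamma}$ from synchronous initial segments of $p$ and $q$ ending at $e_+$, fill this loop by a van Kampen diagram $D$ over a finite relative presentation of $G$, and bound $d_X(e_-, e_+)$ by exploiting the fact that $e$ appears as an \emph{isolated} boundary $H_i$-component of $D$. The linear relative isoperimetric inequality (which relatively hyperbolic groups enjoy by definition, with isoperimetric constant $M$) then turns the resulting area bound into a bound linear in $\ell([p_-, e_+])$.

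\vs

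First I would set up the loop. Let $u$ be the vertex on $q$ synchronous with $e_+$ and $w$ the vertex synchronous with $p_-$; by the $k$-similarity hypothesis, there are paths $\alpha, \beta$ in $\hat{\Gamma}$ of length $\leq k$ joining $e_+$ to $u$ and $w$ to $p_-$. Writing $p_1 := [p_-, e_+]$ and $q_1 := [w, u]$, the concatenation $\gamma = p_1 \cdot \alpha \cdot q_1^{-1} \cdot \beta$ is a closed loop in $\hat{\Gamma}$; since $q$ is a $(\lambda, c)$-quasi-geodesic and synchronous endpoints of $p_1$ are within $k$ of the endpoints of $q_1$, one has $\ell(q_1) \leq \lambda \ell(p_1) + c'$ for some $c' = c'(\lambda,c,k)$, giving $|\gamma|_{\hat{\Gamma}} = O(\ell(p_1))$. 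Filling $\gamma$ by a van Kampen diagram $D$ via the linear relative isoperimetric inequality yields $\mathrm{Area}(D) \leq M|\gamma|_{\hat{\Gamma}} = O(\ell(p_1))$.

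\vs

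Next, and this is the technical core, I would verify that $e$ is an isolated $H_i$-boundary-component of $D$: no chain of $H_i$-cells of $D$ glues $e$ to another $H_i$-edge on $\partial D$. The candidate offending edges lie on $p_1$ (excluded by the no-backtracking of $p$), on $q_1$ synchronous with $e$ (excluded by the standing hypothesis), on $q_1$ at a non-synchronous position, or on $\alpha, \beta$ (these contribute at most $O(k)$ edges, absorbable into the final constant). The non-synchronous case is the delicate one: I would argue, after approximating $p, q$ by geodesics of the same endpoints and invoking the already-cited Theorem 3.37 to control the $d_X$-distance between synchronous vertices of the geodesic companions, that any such connection in $D$ would force two genuinely synchronous components of $p$ and $q$ to be connected in $\hat{\Gamma}$ at the cost of enlarging $k$ to some $k' = k'(\lambda, c, k)$, contradicting the hypothesis at this enlarged similarity constant. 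With $e$ thus isolated, the standard isolated-component lemma for van Kampen diagrams over a relative presentation (which bounds $d_X$ of an isolated boundary $H_i$-edge by a constant $K$ times the number of cells of $D$) yields $d_X(e_-, e_+) \leq K\cdot \mathrm{Area}(D) = O(\ell(p_1))$, delivering the desired $\eta = \eta(\lambda, c, k)$.

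\vs

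The principal obstacle is the isolation step: the hypothesis prohibits only connections between \emph{synchronous} components, yet a priori $D$ could link $e$ to an $H_i$-edge of $q_1$ at an entirely different parameter. Converting this possibility into a contradiction with the hypothesis requires a quantitative form of quasi-geodesic stability in $\hat{\Gamma}$ (to pass from $p, q$ to nearby geodesics without losing control of the synchronous parametrization) together with the appropriate use of Theorem 3.37. A subsidiary technical point is the isolated-component lemma itself, whose proof unwinds the combinatorics of $H_i$-cell gluings in van Kampen diagrams over the relative presentation and is precisely what Osin establishes in [8] en route to Lemma~3.43.
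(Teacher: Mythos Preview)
The paper does not prove this lemma: it is quoted from Osin's memoir [8, Lemma~3.43] as a black-box ingredient for the proof of Theorem~4, and the paper's only further remark about it is (at the end of the proof of Theorem~4) that $\eta(1,0,\cdot)$ is recursive ``by the proof of Lemma~3.43 in [8].'' There is therefore no in-paper argument against which to compare your proposal.

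On the proposal itself: the architecture---close up a loop from synchronous initial segments plus short connecting arcs, fill it by a van Kampen diagram over the finite relative presentation, bound its area linearly in $\ell(p_1)$ via the relative linear isoperimetric inequality, then apply the isolated-component lemma to $e$---is exactly the toolkit Osin builds in [8], so the plan is on the right track. The gap is precisely where you flag it and then do not close it. Your proposed treatment of a connection from $e$ to a \emph{non-synchronous} $H_i$-component of $q_1$ is not an argument: you say such a connection would ``contradict the hypothesis at an enlarged similarity constant $k'$,'' but the hypothesis is assumed only for the given $k$, so there is nothing to contradict at $k'$. Invoking Theorem~3.37 does not rescue this either, since a $d_X$-bound between synchronous vertices says nothing about $H_i$-coset membership, which is what ``connected'' means for components. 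To make the isolation step rigorous you would need the finer combinatorial analysis Osin actually carries out in [8]; as written, this step is a genuine gap rather than a sketch.
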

	
	\vskip30pt
	
	{\huge \textbf{Acknowledgements}}
	
	\vs
	
	The author is very grateful to Inna Bumagin for introducing the author to the subject of this note, for continuous support and advice, for providing helpful suggestions on earlier drafts of this note and for many useful discussions which have aided the author tremendously in producing this work. 
	
	\vskip30pt
	
	\section{Proof of Theorem 1}
	
	\subsection{Bounding the relative length of conjugating elements}
	
	We first show that for arbitrary conjugate lists $\{a_1,...,a_m\}, \{b_1,...,b_m\}$ we may bound above the relative length of a shortest (in relative length) conjugating element. 
	
	\vs 
	
	\begin{thm}
		
		Let $\{a_1,...,a_m\}, \{b_1,...,b_m\}$ be two conjugate lists. Let $\mu = \max \{\vert a_i \vert_X, \vert b_i \vert_X: i = 1,...,m\}$. Let $x$ be an element of minimal relative length such that $\{a_1,...,a_m\}^x = \{b_1,...,b_m\}$. Then $\vert x \vert_{X \cup \mathcal{H}} \leq (\vert X \vert^{\chi(\mu)} + 1)^{\vert X \vert^{\mu} + 1}$, where $\chi(\mu)$ is the constant from Theorem 3.37 in [15] (stated above). 
		
	\end{thm}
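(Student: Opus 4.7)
The plan is to take a conjugator $x$ of minimal relative length, fix a geodesic $p$ in $\hat\Gamma$ from $1$ to $x$, and show that if $p$ exceeded the claimed length then a pigeonhole over a bounded invariant attached to the vertices of $p$ would let us excise a non-trivial subword of $p$, yielding a strictly shorter conjugator and contradicting minimality.

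First I would perform a preliminary reduction. If $a_i = a_j$ for some $i \neq j$, then any conjugator $x$ forces $b_i = b_j$, so one may delete the $j$-th entry from both lists without altering the set of conjugators. After this pruning, the $a_i$'s are pairwise distinct elements of $B_\mu^X(1)$, so $m$ is bounded (crudely) by $|X|^\mu + 1$.

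Next comes the geometric input. For each $i$, the identity $a_i x = x b_i$ says that the left translate $q_i := a_i \cdot p$ is a geodesic of the same label as $p$ running from $a_i$ to $x b_i$, whose endpoints differ from those of $p$ in $X$-distance by $|a_i|_X, |b_i|_X \leq \mu$ respectively. Hence $(p, q_i)$ is a symmetric pair of $\mu$-similar geodesics in $\hat\Gamma$, and Theorem 3.37 of [8] yields, for every vertex $u$ on $p$ and every index $i$,
\[
|u^{-1} a_i u|_X \;=\; d_X(u, a_i u) \;\leq\; \chi(\mu).
\]
I then attach to each of the $|x|_{X \cup \mathcal H} + 1$ vertices $u$ along $p$ the invariant
\[
\Phi(u) \;:=\; (u^{-1} a_1 u, \ldots, u^{-1} a_m u) \;\in\; \bigl(B_{\chi(\mu)}^X(1)\bigr)^{m}.
\]
The number of possible values of $\Phi$ is at most $(|X|^{\chi(\mu)}+1)^m \leq (|X|^{\chi(\mu)}+1)^{|X|^\mu + 1}$, using the crude bound $|B_{\chi(\mu)}^X(1)| \leq |X|^{\chi(\mu)} + 1$ and the reduction above. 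If $|x|_{X \cup \mathcal H}$ exceeded this count, pigeonhole would produce two vertices $u, v$ at positions $k < l$ on $p$ with $\Phi(u) = \Phi(v)$. Writing $v = uw$ where $w$ is the subword of $p$ between positions $k$ and $l$, and $x = uwr$ accordingly, the identity $u^{-1} a_i u = v^{-1} a_i v = w^{-1}(u^{-1} a_i u) w$ says that $w$ commutes with $u^{-1} a_i u$ for every $i$. Therefore $x' := ur$ still conjugates the lists, since $x'^{-1} a_i x' = r^{-1}(u^{-1} a_i u) r = r^{-1} w^{-1}(u^{-1} a_i u) w r = x^{-1} a_i x = b_i$, while $|x'|_{X \cup \mathcal H} < |x|_{X \cup \mathcal H}$, contradicting the minimality of $x$.

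The main obstacle I anticipate is not combinatorial but definitional: one must confirm that $(p, a_i \cdot p)$ really satisfies the hypotheses of being a \emph{symmetric pair of $\mu$-similar geodesics} as used in [8], i.e.\ that the bounded endpoint displacement in $X$ together with the shared labelling coming from left translation line up with the precise definitions employed there. Once that alignment is in place, the remainder is a routine pigeonhole plus the standard relatively-hyperbolic trick of excising a subword on which the conjugator must act trivially.
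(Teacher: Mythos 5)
Your proposal is correct and follows essentially the same argument as the paper: the translate $q_i = a_i\cdot p$ is exactly the opposite side of the paper's conjugacy quadrilateral $Q_i$, your invariant $\Phi(u)$ is the paper's tuple $(a_1^{x_j},\ldots,a_m^{x_j})$ bounded via Theorem 3.37 of [8], and your excision $x'=ur$ is the paper's shorter conjugator $x_sx_t^{-1}x$. The definitional point you flag is handled in the paper simply by observing that the two sides labeled by the fixed geodesic word for $x$ form a symmetric pair of $\mu$-similar geodesics, as in your translation argument.
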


	\begin{proof}
		
		We follow a similar idea to the proof of Theorem 3.3 in [6].
		
		\vs
		
		First, note that we may assume that the $a_i$ are all distinct, as if there are any repetitions in the list, then we may simply remove the repeated elements from $\{a_1,...,a_m\}$ and $\{b_1,...,b_m\}$ and this will not affect the conjugacy of the original list (the list with repeated elements). 
		
		\vs 
		
		We begin by fixing a geodesic word representing $x$: $x = g_1 g_2 ... g_{\ell}$, where $g_i \in X \cup \mathcal{H}$ for all $i=1,...,\ell$. We shall also denote $x_j := g_1...g_j$ for each $j = 1,...,\ell$. Suppose, for contradiction, that $\ell > (\vert X \vert^{\chi(\mu)} + 1)^{\vert X \vert^{\mu} + 1}$. 
		
		\vs 
		
		Note that for each $i = 1,...,m$, the equality $a_i^x = b_i$ produces a geodesic quadrilateral $Q_i$ in the relative Cayley graph $\hat{\Gamma}$ as shown in Figure 1, where the $a_i, b_i$ are represented by relative geodesics, which are labeled by fixed geodesic words over $X \cup \mathcal{H}$ that we have chosen for the $a_i, b_i$. In each $Q_i$, the yellow paths $\alpha_i, \beta_i$ (which form the top and bottom edges, respectively, of the geodesic quadrilateral $Q_i$) are $\mu$-similar symmetric geodesics in $\hat{\Gamma}$ labeled by the geodesic word $g_1...g_{\ell}$ representing $x$. 
		
		\vs 
		
		For any $j = 1,...,\ell$, we examine the synchronous subpaths $p_{ij} \subseteq \alpha_i$ and $q_{ij} \subseteq \beta_i$ which are each labeled by the subword $x_j$ of $x$ (see Figure 1). Denoting $u_{ij} : = (p_{ij})_+, v_{ij} := (q_{ij})_+$, we have that $u_{ij}, v_{ij}$ are synchronous vertices on $\alpha_i, \beta_i$ so by Theorem 3.37 in [15] we have $d_X(u_{ij},v_{ij}) \leq \chi(\mu)$, where $\chi(\mu)$ is a constant from Theorem 3.37 in [15] depending only on $\mu$. Note that $v_{ij}^{-1}u_{ij} = a_i^{x_j}$, so we obtain $\vert a_i^{x_j} \vert_X \leq \chi(\mu)$ for each $i = 1,...,m$ and $j = 1,...,\ell$.

\begin{figure} [H]
	\centering
	\includegraphics[width=0.6\linewidth]{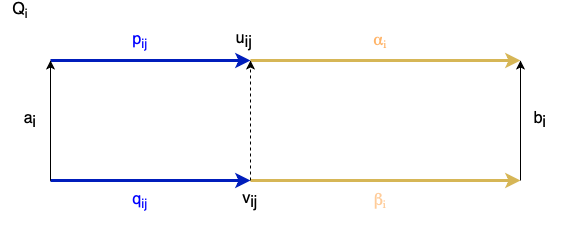}
	\caption{The geodesic quadrilateral $Q_i$ representing the conjugacy $a_i^x = b_i$ and the synchronous subpaths $p_{ij}, q_{ij}$ representing the subword $x_j$ of $x$}
	\label{fig:figure-1-2}
\end{figure}

		
		\vs 
		
		Thus, for each $j = 1,...,\ell$, the tuple $(a_1^{x_j},...,a_m^{x_j})$ is an element of $(B^X_{\chi(\mu)}(1))^m$, the cardinality of which is bounded above by $(\vert X \vert^{\chi(\mu)} + 1)^m$. Let us now bound above $m$ in terms of $\mu$. 
		
		\vs 
		
		 As $\vert b_i \vert_X \leq \mu$ for all $i$ and since the $b_i$ are all distinct (as the $a_i$ are all distinct and the lists are conjugate), we have $m \leq \vert B^X_{\mu}(1) \vert \leq \vert X \vert^{\mu} + 1$. 
		 
		 \vs 
		 
		 Therefore, as $j$ ranges through $\{1,...,\ell\}$, the number of distinct tuples $(a_1^{x_j},...,a_m^{x_j})$ that may be produced is at most $(\vert X \vert^{\chi(\mu)} + 1)^{\vert X \vert^{\mu} + 1}$. Since $\ell > (\vert X \vert^{\chi(\mu)} + 1)^{\vert X \vert^{\mu} + 1}$, by the Pigeonhole Principle, there exist $1 \leq s < t \leq \ell$ such that $(a_1^{x_s},...,a_m^{x_s}) = (a_1^{x_t},...,a_m^{x_t})$, i.e. $a_i^{x_s x_t^{-1}} = a_i$ for all $i = 1,...,m$. This yields that $x_s x_t^{-1} x$ conjugates $\{a_1,...,a_m\}$ to $\{b_1,...,b_m\}$. However, note that $\vert x_s x_t^{-1} x \vert_{X \cup \mathcal{H}} \leq \vert x_s \vert_{X \cup \mathcal{H}} + \vert x_t^{-1} x \vert_{X \cup \mathcal{H}} = s + \ell - t < \ell$, contradicting the minimality of $\ell$. 
		 
		 \vs 
		 
		 We therefore conclude that $\ell \leq (\vert X \vert^{\chi(\mu)} + 1)^{\vert X \vert^{\mu} + 1}$.
		
	\end{proof}

	\begin{rem}
		
		Note that Theorem 2 gives a short proof of the solvability of the GCP in $G$ when $G$ is a hyperbolic group (where $\mathcal{H} = \{1\}$). 
		
	\end{rem}
	
	\subsection{Bounding parabolic components of conjugating elements}
	
	We bound parabolic components of minimal relative length conjugating elements to establish the following theorem, which yields Theorem 1.
	
	\begin{thm}
		
		There is a recursive function $g: \mathbb{N} \rightarrow \mathbb{N}$ such that the following holds. Suppose that $\{a_1,...,a_m\}, \{b_1,...,b_m\}$ are conjugate lists of elements. Let $\mu = \max \{\vert a_i \vert_X, \vert b_i \vert_X : i = 1,...,m\}$. There exists a conjugating element $x$ between the two lists such that $\vert x \vert_X \leq (\vert X \vert^{\chi(\mu)} + 1)^{\vert X \vert^{\mu} + 1} g(\mu)$. 
		
	\end{thm}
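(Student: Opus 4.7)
By Theorem 2, fix a conjugator $x$ of minimal relative length, so $\vert x \vert_{X \cup \mathcal{H}} \leq L := (\vert X \vert^{\chi(\mu)} + 1)^{\vert X \vert^{\mu} + 1}$, together with a geodesic factorization $x = g_1 \cdots g_\ell$, $\ell \leq L$, each $g_j \in X \cup \mathcal{H}$. Each letter $g_j \in X$ contributes only $1$ to $\vert x \vert_X$, so it suffices to bound $\vert g_j \vert_X$ for each parabolic letter $g_j \in H_k$ by a recursive function $g_0(\mu)$; then $\vert x \vert_X \leq \ell \cdot g_0(\mu) + \ell \leq L \cdot g(\mu)$ with $g(\mu) := g_0(\mu) + 1$.

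For each parabolic letter $g_j \in H_k$, the plan is to replace it with a shortest (in $X$-length) element $g_j^\star \in H_k$ such that the modified word $g_1 \cdots g_{j-1} g_j^\star g_{j+1} \cdots g_\ell$ is still a conjugator of the two lists. A direct computation (writing $x = x_{j-1} g_j y_{j+1}$ and comparing $a_i^{x'}$ to $a_i^x = b_i$) shows that the valid replacements form the coset $S_j = Z_j \cdot g_j \subseteq H_k$, where $Z_j := \bigcap_{i=1}^{m} C_{H_k}\bigl(a_i^{x_{j-1}}\bigr)$. Performing these replacements sequentially (from left to right, re-reading the prefix at each step) preserves the relative length $\ell$ throughout, since if $1 \in S_j$ at any stage then deletion of $g_j$ would produce a conjugator of relative length $\ell - 1$, contradicting minimality. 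Moreover, the key inequality $\vert a_i^{x_{j-1}} \vert_X \leq \chi(\mu)$ derived inside the proof of Theorem 2 continues to apply to each intermediate conjugator, as it depends only on the current conjugator having relative length $\leq L$.

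The central task is to show that the shortest element of $S_j$ has $X$-length at most a recursive function $g_0(\mu)$ of $\mu$. To this end, partition the indices as $I := \{i : a_i^{x_{j-1}} \in H_k\}$ and its complement. For $i \in I$, both $a_i^{x_{j-1}}$ and $a_i^{x_j}$ lie in $H_k$, so the constraint $(g_j^\star)^{-1} a_i^{x_{j-1}} g_j^\star = a_i^{x_j}$ becomes a simultaneous conjugacy problem in $H_k$ on inputs of bounded $X$-length; solvability of the GCP in $H_k$, combined with enumeration of candidate conjugators in $H_k$ of increasing length (using the word problem), yields a recursive bound on the shortest $H_k$-conjugator. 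For $i \notin I$, the almost malnormality of the parabolic collection forces $C_{H_k}\bigl(a_i^{x_{j-1}}\bigr) = H_k \cap a_i^{x_{j-1}} H_k \bigl(a_i^{x_{j-1}}\bigr)^{-1}$ to be finite, imposing only finitely many coset constraints on $g_j^\star$; these can be enumerated using the decidable membership problem for $H_k$. Since the inputs $a_i^{x_{j-1}}, a_i^{x_j}$ range over the finite ball $B^X_{\chi(\mu)}(1)$, taking the maximum over the finitely many configurations yields the desired recursive bound $g_0(\mu)$.

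The main obstacle is the uniform, recursive handling of the constraints coming from indices $i \notin I$: one must ensure that each finite centralizer $C_{H_k}\bigl(a_i^{x_{j-1}}\bigr)$ can be effectively exhausted (knowing a priori only that it is finite), and that the resulting finite constraints combine consistently with the GCP-based bound from $i \in I$, in a way that is uniform over the finitely many possible input configurations drawn from $B^X_{\chi(\mu)}(1)$.
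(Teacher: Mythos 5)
Your overall strategy --- fix a conjugator $x = g_1\cdots g_\ell$ of minimal relative length, keep $\ell$ fixed using the minimality argument, and replace each parabolic syllable $g_j \in H_k$ by a shortest valid substitute --- is close in spirit to the paper's, and several pieces are correct: the valid substitutes do form the coset $S_j = Z_j g_j$ with $Z_j = \bigcap_i C_{H_k}\bigl(a_i^{x_{j-1}}\bigr)$, the inequality $\vert a_i^{x_{j-1}}\vert_X \le \chi(\mu)$ does persist through the replacements, and your treatment of the indices $i \in I$ (those with $a_i^{x_{j-1}} \in H_k$) via the GCP in $H_k$ matches the paper's Case 2. Indeed, your condition $a_i^{x_{j-1}} \in H_k$ is exactly the paper's condition that the $H_k$-component of the top side of the quadrilateral $Q_i$ is connected to its synchronous component on the bottom side, and the paper's connectors $g_{ij}, f_{ij}$ are your $a_i^{x_{j-1}}, a_i^{x_j}$.

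The gap is in the indices $i \notin I$. Almost malnormality does show that $C_{H_k}\bigl(a_i^{x_{j-1}}\bigr) \subseteq H_k \cap a_i^{x_{j-1}} H_k \bigl(a_i^{x_{j-1}}\bigr)^{-1}$ is finite (note: containment, not equality), hence that $S_j$ is finite. But finiteness of $S_j$ gives no upper bound on the $X$-length of its shortest element: in the extreme case $Z_j = \{1\}$ the only valid substitute is $g_j$ itself, and nothing in your argument bounds $\vert g_j \vert_X$. This is not merely the effectivity issue you flag at the end about exhausting a finite centralizer --- even with complete knowledge of $Z_j$, the approach produces no length bound. What is needed is a geometric input, and this is exactly where the paper's argument differs: if $a_i^{x_{j-1}} \notin H_k$ for some $i$, then in $Q_i$ the component labelled $g_j$ is \emph{not} connected to its synchronous component, and Osin's Lemma 3.43, applied to the symmetric pair of one-edge geodesics formed by that component and its synchronous counterpart (which are $\chi(\mu)$-similar by Theorem 3.37, since their endpoints are synchronous vertices), gives directly $\vert g_j \vert_X \le \eta(1,0,\chi(\mu))$; no replacement is needed for such syllables. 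Substituting this bound for the almost-malnormality argument repairs the proof, and the ``combination'' difficulty you raise then disappears: each syllable either needs no replacement (some $i \notin I$) or is constrained only by the GCP instance in $H_k$ (all $i \in I$).
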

	
	\begin{proof}
		
		Let $x$ be a shortest (in relative length) conjugating element between the two lists. By Theorem 2, we have $\vert x \vert_{X \cup \mathcal{H}} \leq (\vert X \vert^{\chi(\mu)} + 1)^{\vert X \vert^{\mu} + 1}$. Fixing a geodesic word $w$ representing $x$, as in the proof of Theorem 2, we have $m$ geodesic quadrilaterals representing simultaneous conjugacy diagrams as shown in Figure 2, where each orange geodesic path has label the fixed geodesic word that we have chosen for $x$ and each $a_i, b_i$ are represented by fixed geodesics. In each geodesic quadrilateral the geodesics whose label represents $x$ are a symmetric pair of $\mu$-similar geodesics.

		
		\vs 
		
		We consider two cases:
		
		\vs 
		
		\underline{Case 1:} There does not exist an $H_i$-syllable of $w$ such that the $H_i$-component corresponding to this syllable is connected to its synchronous component in each quadrilateral $Q_j$. Then for each syllable $w$, we may choose some quadrilateral $Q_i$ such that the component labeled $h$ corresponding to this syllable is not connected to its synchronous component on the opposite side of $Q_i$. Looking at the symmetric pair of geodesics labeled by this component $h$, these geodesics are $\chi(\mu)$-similar by Theorem 3.37 in [15] because their endpoints are synchronous vertices on the large geodesic labeled $x$. By Lemma 3.43 in [15] (stated above) we have $\vert h \vert_X \leq \eta(1,0,\chi(\mu)) \vert h \vert_{X \cup \mathcal{H}} = \eta(1,0,\chi(\mu)$. Thus, we may bound above the $X$-length of each syllable of $w$ by $\eta(1,0,\chi(\mu))$. We therefore have that $\vert x \vert_X \leq \vert x \vert_{X \cup \mathcal{H}} \eta(1,0,\chi(\mu)) \leq (\vert X \vert^{\chi(\mu)} + 1)^{\vert X \vert^{\mu} + 1} \eta(1,0,\chi(\mu))$. 
		
		\vs 
		
		\underline{Case 2:} There exists a syllable of $w$ such that the component corresponding to this syllable is connected to its synchronous component in each quadrilateral $Q_i$. Let $h_1,...,h_n$ be the labels all of the components which are connected to their synchronous components in each of the quadrilaterals $Q_i$ (see Figure 2) (note that the $h_i$ may belong to different parabolic subgroups). 
		
		\vs

\begin{figure} [H]
	\centering
	\includegraphics[width=0.6\linewidth]{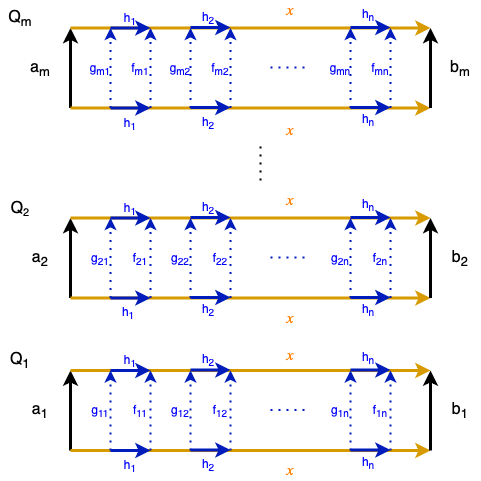}
	\caption{The case of connected synchronous components in each conjugacy diagram $Q_i$ (Case 2)}
	\label{fig:figure-3-2}
\end{figure}

		
		\vs 
		
		Denote the connectors between the component labeled $h_j$ with its synchronous component in $Q_i$ by $g_{ij}$ and $f_{ij}$ for each $i = 1,...,m$ and $j = 1,...,n$ (see Figure 2).
		
		\vs 
		
		We have that for each $j = 1,...,n$, $\{g_{1j}, g_{2j},..., g_{mj}\}^{h_j} = \{f_{1j}, f_{2j},..., f_{mj}\}$. Therefore, the lists \newline$\{g_{1j}, g_{2j},..., g_{mj}\}, \{f_{1j}, f_{2j},..., f_{mj}\}$ are elements in some parabolic subgroup $H$ which are conjugate by an element of $H$. Since the GCP is solvable in $H$, there exists a conjugating element $s_j \in H$ such that $\{g_{1j}, g_{2j},..., g_{mj}\}^{s_j} = \{f_{1j}, f_{2j},..., f_{mj}\}$ and $\vert s_j \vert_X \leq \theta(\mu)$ for some recursive function $\theta$ (this $\theta$ depends on the parabolic subgroup but we may just take the max of the $\theta$s over each of the parabolic subgroups to remove this dependence on the subgroups, and also the argument of $\theta$ should depend on $\max \{\vert g_{ij} \vert_X, \vert f_{ij} \vert_X : i = 1,...,m\}$, but we have $\vert g_{ij} \vert_X, \vert f_{ij} \vert_X \leq \chi(\mu)$ for all $i,j$ so we may assume the upper bound is a function of $\mu$). Let $x' \in G$ be formed by replacing each label $h_j$ by $s_j$. Note that $\{a_1,...,a_m\}^{x'} = \{b_1,...,b_m\}$ (because $s_j$ conjugates each $\{g_{1j}, g_{2j},..., g_{mj}\}$ to $\{f_{1j}, f_{2j},..., f_{mj}\}$) and $\vert x' \vert_{X \cup \mathcal{H}} = \vert x \vert_{X \cup \mathcal{H}}$. In $x'$ we may bound the $X$-length of the $s_j$ by $\theta(\mu)$ and for the components of the geodesics representing $x'$ which are not connected to their synchronous component in some $Q_i$, then as in Case 1 we can bound the $X$-length of such components by $\eta(1,0, \chi(\mu))$. Thus, each component of our word representing $x'$ has $X$-length bounded above by $\max \{\theta(\mu), \eta(1,0, \chi(\mu))\}$. Hence $\vert x' \vert_X \leq \vert x' \vert_{X \cup \mathcal{H}} \max \{\theta(\mu), \eta(1,0, \chi(\mu))\} = \vert x \vert_{X \cup \mathcal{H}} \max \{\theta(\mu), \eta(1,0, \chi(\mu))\} \leq (\vert X \vert^{\chi(\mu)} + 1)^{\vert X \vert^{\mu} + 1} \max \{\theta(\mu), \eta(1,0, \chi(\mu))\}$. 
		
		\vs 
		
		Therefore, we set $g = \max \{\theta, \eta(1,0,\chi(-))\}$. Note that this is a recursive function since $\theta$ is a recursive function (by the solvability of the GCP in each of the parabolic subgroups), $\eta(1,0,-)$ is a recursive function by the proof of Lemma 3.43 in [15] and $\chi$ is a recursive function by the proof of Theorem 3.37 in [15]. 
		
	\end{proof}

	\begin{rem}
		Note that if the parabolic subgroups are abelian, then we do not have Case 2 in the proof of Theorem 4, as if we had Case 2, then we would have $g_{ij} = f_{ij}$ for all $i,j$ because they are conjugate elements in an abelian group. Therefore, we could obtain a new conjugating element between $\{a_1,...,a_m\}, \{b_1,...,b_m\}$ by deleting each of the connected synchronous components shown in Case 2. This would be a shorter conjugating element than $x$, a contradiction.
	\end{rem}

	\vs 
	
	\textbf{Remark 6}: As a final remark, we discuss the difficulty in deducing a solution to the GCP in relatively hyperbolic groups from solutions to the conjugacy problem given in [1], [2], [8] and [14]. 
	
	\vs 
	
	Given two finite lists $\{a_1,...,a_m\}, \{b_1,...,b_m\}$ of elements of $G$, we can use the solutions to the conjugacy problem in the above papers to determine conjugacy of each pair $(a_i, b_i)$ and use the solution to the conjugacy search problem to find a conjugating element between these pairs if one exists. But if we find that each pair $(a_i, b_i)$ are conjugate and produce a conjugating element $x_i$ for the pair $(a_i, b_i)$, there does not seem to be a way in general to put the results for each of the pairs together to determine whether the entire lists are conjugate.
	
	\vs
	
	If one could compute one of the centralizers $C(a_i)$ and if one applies the solution to the conjugacy search problem to the pair $(a_i, b_i)$ to produce a conjugating element $x_i$ if one exists, then the GCP could be solved if one could find a way to efficiently check through the (possibly infinite) set $C(a_i)x_i$ for an element that conjugates the lists $\{a_1,...,a_m\}, \{b_1,...,b_m\}$ (as any element that conjugates the lists would have to reside in $C(a_i)x_i$ because a conjugating element between the lists would conjugate $a_i$ to $b_i$ in particular and $C(a_i)x_i$ is precisely the set of elements that conjugate $a_i$ to $b_i$). However, to the best of the author's knowledge, in general finitely generated relatively hyperbolic groups, there does not exist any way to compute the centralizers $C(a_i)$ for general elements $a_i$ (though these centralizers can be computed in special cases such as if $a_i$ is a parabolic element in a toral relatively hyperbolic group, in which case $C(a_i)$ is just the unique conjugate parabolic subgroup containing $a_i$). Furthermore, even if the centralizers could be computed, the author knows of no algorithm to determine if there is an element in $x_i C(a_i)$ that conjugates $\{a_1,...,a_m\}$ to $\{b_1,...,b_m\}$. The author believes that the best bet for such algorithms would be suitable generalizations of algorithms for hyperbolic groups that compute centralizers of elements and test for conjugating elements between the lists. Such algorithms for hyperbolic groups are given in [6] and [7], however, the author has not yet found a way to generalize these algorithms to relatively hyperbolic groups. Therefore, the solvability of the GCP in relatively hyperbolic groups does not clearly follow from the work of [1], [2], [8] nor [14].

	\newpage
	\section{References}
	
	[1] Antolin, Y, Ciobanu, L. Finite Generating Sets of Relatively Hyperbolic Groups and Applications to Geodesic Languages. \textit{Transactions of the American Mathematical Society} 368.11 : 7965–8010 (2016).
	
	\vs
	
	[2] Antolin, Y, Sale, A. Permute and conjugate: the conjugacy problem in relatively hyperbolic groups. \textit{The bulletin of the London Mathematical Society}, Vol.48(4), p.657-675 (2016).
	
	\vs
	
	[3] Beeker, B. Multiple conjugacy problem in graphs of free abelian groups. \textit{Groups Geom. Dyn}. 9, no. 1 pp. 1–27 (2015).
	
	\vs
	
	[4] Bezverkhnii, V.N. Decision of the generalized conjugacy problem in Artin groups of large type. \textit{Fundam. Prikl. Mat.}, 1999, Volume 5, Issue 1, Pages 1–38 (1999).
	
	\vs
	
	[5] M.R. Bridson and A. Haefliger. Metric Spaces of Non-Positive Curvature. Springer-Verlag, Berlin (1999).
	
	\vs
	
	[6] M.R. Bridson, J. Howie. Conjugacy of finite subsets in hyperbolic groups. \textit{International Journal of Algebra and Computation}, 15(4), 725-756 (2005). 
	
	\vs
	
	[7] Buckley, D.J., Holt D.F. The Conjugacy Problem in Hyperbolic Groups for Finite Lists of Group Elements. \textit{International Journal of Algebra and Computation}, 23(5), 1127-1150 (2013). 
	
	 \vs
	 
	 [8] Bumagin, I. Time complexity of the conjugacy problem in relatively hyperbolic groups. \textit{International Journal of Algebra and Computation}, 25(5):689-723 (2015). 
	 
	 \vs 
	 
	 [9] B. Eick, D. Kahrobaei, Polycyclic groups: A new platform for cryptology?, math.GR/0411077, Preprint (2004).
	 
	 \vs 
	 
	 [10] D.B.A. Epstein and D.F. Holt. The linearity of the conjugacy problem in word-hyperbolic groups. \textit{International Journal of Algebra and Computation}, 16(2):287-305 (2006).
	 
	 \vs
	 
	 [11] Geun Hahn, S., Lee, E., Hong Park, J. Complexity of the generalized conjugacy problem. \textit{Discrete Applied Mathematics}, 130(1), 33-36 (2003). 
	 
	 \vs
	 
	 [12] O. Kharlampovich, E. Ventura, A Whitehead algorithm for Toral Relatively Hyperbolic Groups. \textit{International Journal of Algebra and Computation,} Vol. 22, No. 08 (2012).
	 
	 \vs
	 
	 [13] Ladra, M., Silva, P. V. The generalized conjugacy problem for virtually free groups. \textit{Forum Mathematicum}, 23(3), 447-482 (2011). 
	 
	 \vs
	 
	 [14] Z. O’Connor. Conjugacy search problem for relatively hyperbolic groups. math.GR/1211.5561, Preprint (2012).
	 
	 \vs
	
	[15] Osin, D.V. Relatively hyperbolic groups: intrinsic geometry, algebraic properties, and algorithmic problems. \textit{Mem. Amer. Math. Soc.}, 179(843):vi+100 (2006).


\end{document}